\documentclass[12pt]{imsart}
\usepackage{iftex}

\ifPDFTeX 
  \usepackage[utf8]{inputenc}
  \usepackage[T1]{fontenc}
  \usepackage{lmodern}
\fi

\usepackage[margin=1in]{geometry}
\usepackage{graphicx,xcolor,mathrsfs,bbm}

\usepackage[toc]{appendix}
\RequirePackage[numbers]{natbib}
\usepackage{amsmath,amssymb,amsthm,bm,latexsym,mathtools}
\usepackage{amsfonts}
\usepackage{dsfont}
\usepackage{orcidlink}
\usepackage{comment}
\usepackage{enumitem}
\usepackage{url}
\usepackage{booktabs}
\usepackage{hyperref}
\hypersetup{colorlinks=true,citecolor=blue,linktocpage=true}
\usepackage{cleveref}

\startlocaldefs

\makeatletter
\newcommand{\namedlabel}[2]{\begingroup #2\def\@currentlabel{\textnormal{#2}}%
  \phantomsection\label{#1}\endgroup}
\makeatother
  
\setlist[enumerate,1]{label=(\roman*)}
\setlist[enumerate,2]{label=(\alph*)}

\def\E{\mathbb{E}}
\def\P{\mathbb{P}}
\def\N{\mathbb{N}}

\def\R{\mathbb{R}}

\def\T{\mathbb T}
\def\Fbar{\overline F}

\AtBeginDocument{}

\newtheorem{dummy}{***}[section]
\newtheorem{theorem}[dummy]{Theorem}

\newtheorem{lemma}[dummy]{Lemma}
\newtheorem{remark}[dummy]{Remark}

\AddToHook{env/theorem/begin}{\crefalias{dummy}{theorem}}
\AddToHook{env/example/begin}{\crefalias{dummy}{example}}
\AddToHook{env/corollary/begin}{\crefalias{dummy}{corollary}}
\AddToHook{env/definition/begin}{\crefalias{dummy}{definition}}
\AddToHook{env/assumption/begin}{\crefalias{dummy}{assumption}}
\AddToHook{env/lemma/begin}{\crefalias{dummy}{lemma}}
\AddToHook{env/remark/begin}{\crefalias{dummy}{remark}}
\AddToHook{env/proposition/begin}{\crefalias{dummy}{proposition}}
\crefname{theorem}{Theorem}{Theorems}
\crefname{example}{Example}{Examples}
\crefname{corollary}{Corollary}{Corollaries}
\crefname{definition}{Definition}{Definitions}
\crefname{assumption}{Assumption}{Assumptions}
\crefname{lemma}{Lemma}{Lemmas}
\crefname{remark}{Remark}{Remarks}
\crefname{proposition}{Proposition}{Propositions}

\numberwithin{equation}{section}  

\endlocaldefs

\begin{document}

\begin{frontmatter}
\title{\Large A Sharp Phase Transition for Killed Branching Random Walks with Heavy-Tailed Displacements}
\runtitle{Phase Transition for Killed Branching Random Walks}
\runauthor{R. J. Samanta}

\author[UCL]{%
\fnms{Ramkrishna Jyoti} \snm{Samanta}%
\ead[label=e1]{ramkrishna.samanta.24@ucl.ac.uk}}

\address[UCL]{%
Department of Statistical Science,
University College London,
Gower Street, London WC1E 6BT, UK}
\footnotetext{E-mail: \texttt{ramkrishna.samanta.24@ucl.ac.uk}}

\begin{abstract}
We study the survival of a branching random walk in a supercritical Galton-Watson tree subject to a deterministic, superlinear killing barrier with heavy-tailed displacements. For a fixed $m>0$, we consider a family of such barriers
and kill particles whose ancestral paths fall below the barrier. Under some standing assumptions, we establish a sharp phase transition at $m=4$: the process becomes extinct almost surely for $m<4$, while survives with positive probability for $m>4$. Notably, a natural first-moment heuristic suggests the threshold $m=2$; the true critical value $4$ arises from a deterministic constraint along infinite surviving rays. At the critical value $m=4$, we provide examples showing that the same standing assumptions do not determine the survival behavior. The proofs are based on analysis of infinite surviving rays and embedded trees. 
\end{abstract}
\end{frontmatter}
\maketitle
\section{Introduction}
Let $\T$ be a Galton-Watson tree, with offspring distribution satisfying 
\begin{equation}\label{eq:GW1}
\mu=\E[\xi]\in(1,\infty).\tag{GW1}
\end{equation} 
It is well known that such a process has a positive probability of survival. We identify this tree with a set of vertices or nodes $v\in \T$ and a distinguished vertex $\rho$ denoting its root. For vertices $v, w\in \T$, we say that $w \preceq v$ if $w$ is an ancestor of $v$.  Thus, 
$$\{w\in \T: w \preceq v\}$$ denotes its ancestral lineage, including $v$ itself and the root $\rho.$ To each vertex $v \in \T \setminus\{\rho\},$ we associate an iid random variable $X_v,$ which we interpret as displacement of the edge connecting $v$ and its parent. Set $S(\rho):=0$ and the position of $v$ is thus given by summing up the displacement along its ancestral lineage. In particular,  
$$S(v) :=  \sum_{w\preceq v} X_w.$$
The collection $\{S(v):v \in \T\}$ is the \textit{branching random walk} associated with $\T.$ We denote $|v|$ to be the generation of the vertex $v \in \T$ and if 
$$\rho=v_0 \prec v_1\prec\ldots\prec v_n = v,$$ we define 
$$X_j(v):= X_{v_j}, \quad S_j(v):=S(v_j), \quad \forall\; j\leq |v|=n.$$ A natural question that arises is, does spatial restriction affect the survival, even when the underlying Galton-Watson process has a positive probability of survival. In particular, we are motivated by the problem considered in \citet{BigginsLubachevskyShwartzWeiss1991}, where particles of a branching random walk are removed when they cross a prescribed barrier. Let $\{b_n\}_{n\geq 0}$ be a deterministic sequence. For a vertex $v \in \T$ with $|v|=n$, we say $v$ is \textit{alive} if 
$$S_j(v)\geq b_j,
\quad \forall\; 1\leq j\leq n$$ and call the sequence as \textit{killing barrier}. 

\citet{BigginsLubachevskyShwartzWeiss1991} obtained a criterion determining whether a branching random walk can survive in the presence of a linear barrier. A corresponding question for branching Brownian motion was studied earlier in \citet{Kesten1978}. For standard binary branching Brownian motion with absorption at the origin, he identified a drift criteria determining almost-sure extinction and survival with positive probability. In the usual normalization, there is a positive probability of survival if and only if the drift of the particles is greater than $-\sqrt{2}$. More involved analysis of the absorption barrier was done in \citet{Jaffuel2012}, where instead of taking a linear barrier, they consider a barrier $b_n = an^{1/3},$ and obtained criteria on $a$ for survival. A similar result was shown to hold in \citet{LiuZhang2019}, where the associated random walk is in the domain of attraction of an $\alpha$-stable law. These works also derive motivation from various studies of extreme particles in the scenario with no spatial killing. Under suitable exponential moment assumptions on the displacement distribution, \citet{Hammersley1974} showed that the maximal position of a supercritical branching random walk grows linearly. More precisely, if
$M_n:=\max_{|v|=n} S(v),$
then, on the event of non-extinction,
$M_n/n\rightarrow \gamma$
almost surely, where $\gamma$ is a deterministic constant depending on the offspring and displacement distributions. This was further considered under a  killed scenario by \citet{GantertHuShi2011},  where the barrier was chosen to be linear with slope $\gamma-\varepsilon$, and studied a finer question of how does the probability of survival behave as $\varepsilon \downarrow 0.$ Questions of persistence of killed branching Brownian motion was considered in the subcritical case by \citet{AidekonJaffuel2011}. 

A similar path is followed in the scenarios where the exponential moment assumption on the displacement distribution of the associated random walk does not hold. \citet{Durrett1983} considers a supercritical branching random walk, where displacement have regularly varying tails. He showed that $M_n$ grows on an exponential scale and that there is a normalizing sequence $c_n$ such that $M_n/c_n$ converges in distribution to a suitable limit. Further, \citet{Gantert2000} studied a similar question with the case of stretched exponential distribution, and showed the existence of a sequence $\psi_n$ such that $M_n/\psi_n$ converges to a deterministic limit. 

The study of maxima in Branching Brownian motion has been further developed in \citet{AddarioBerryReed2009} and \citet{aidekon2013convergence}. A finer understanding of the extremal particles have also been considered through
the convergence of extremal point processes. In the light-tailed setting,
such results have been established for branching Brownian motion; see, for
instance, \citet{ArguinBovierKistler2013,AidekonBerestyckiBrunetShi2013}, and
for branching random walks in \citet{Madaule2017}. Corresponding results have been obtained in heavy-tailed
settings. For branching random walks with regularly varying displacements,
\citet{BhattacharyaHazraRoy2017,BhattacharyaHazraRoy2018} established
convergence of the extremal point process. More recently, related questions
for stretched exponential displacements have been considered in
\citet{DyszewskiGantertExtremal} and for log-slowly varying tails in \citet{BhattacharyaDyszewskiGantertPalmowski2025}. 

Existing results for killed branching random walks therefore, have dealt with linear or near-linear barriers in the cases where the step distribution is light-tailed, whereas the heavy-tailed branching-random-walk literature has mostly focused on extremal particles without any spatial constraint. Here, we address the corresponding question for heavy-tailed branching-random-walks with killing barrier.

\vspace{0.2in}

In this context, define $\Fbar(x) := \P(X\geq x)$ and for fixed $m>0$, we consider the barrier of the form 
$$b_0^{(m)} := 0 \quad\text{ and }\quad b_n^{(m)}:= a(n/m).$$ 

We work under the following standing assumptions.
\begin{description}[leftmargin=3.2em,labelwidth=2.7em,itemsep=1em]

\item[\normalfont{\rm(A1)}]
The function $a:(0,\infty)\to(0,\infty)$ is strictly increasing and
$a(t)/t\rightarrow\infty,$ as $t\to\infty.$

\item[\normalfont{\rm(A2)}]
$\log_\mu \Fbar(a(t))
=
-t+o(t),
\quad t\to\infty.
$

\item[\normalfont{\rm(A3)}]
For every fixed $q>0$,
$\log_\mu
\Fbar(a(t/q)/t)
=
-t/q+o(t),
\quad t\to\infty.$

\end{description}

For the left tail, we assume
\begin{equation}
\sum_{n=2}^{\infty}
n\,\P\left(
X<-\frac{a((n-1)/m)}{n}
\right)
<\infty.
\tag{$L_m$}
\label{eq:Lm}
\end{equation}

Assumption \textnormal{(A1)} makes sure that the killing barrier $b_n^{(m)}$ is superlinear in $n.$ By \textnormal{(A2)}, $\overline F(a(t))=\mu^{-t+o(t)}$ and since the expected number of particles of the underlying Galton-Watson process at generation $n$ is $\mu^n$, the level $a(n)$ is the scale where the offspring growth of the tree and the probability of such  a displacement balance on the exponential scale, since $\mu^n\overline F(a(n))=\mu^{o(n)}$. This carries motivation from the formulation in \citet{Durrett1983}. \textnormal{(A3)} is used in the proof of extinction in order to upper bound the persistence probability of paths above the barrier. We further note that for many standard heavy-tailed distributions, including regularly varying and lognormal-type tails, \textnormal{(A2)} would also imply \textnormal{(A3)}, as will be discussed in the subsequent remark. The condition ($L_m$) is usually satisfied whenever the distribution has light-tailed left tails or exponential moments of the left tails exist and used along with \textnormal{(A2)} for proving the survival part.

\begin{remark}
The above assumptions are satisfied by several standard classes of
heavy-tailed distributions. We provide two examples.

First, suppose that the right and left tails are of lognormal type, that is, as $x\to\infty$,
$$
\P(X\geq x)
=
\exp\left\{
-\frac{(\log x-\mu_{+})^2}{2\sigma_{+}^2}
+o((\log x)^2)
\right\},
$$
and
$$
\P(X\leq -x)
=
\exp\left\{
-\frac{(\log x-\mu_{-})^2}{2\sigma_{-}^2}
+o((\log x)^2)
\right\},
$$
for some $\mu_{+},\mu_{-}\in\R$ and
$\sigma_{+},\sigma_{-}>0$. Then one may take
$$
a(t)
=
\exp\left\{
\mu_{+}+\sigma_{+}\sqrt{2(\log\mu)t}
\right\}.
$$
Similarly, if
$$
\P(X\geq x)=x^{-\alpha_{+}}L_{+}(x),
\qquad
\P(X\leq -x)=x^{-\alpha_{-}}L_{-}(x),
$$
where $\alpha_{+},\alpha_{-}>0$ and $L_{+},L_{-}$ are slowly varying functions, then
one can consider
$$
a(t)=\mu^{t/\alpha_{+}}.
$$
Similarly, various combination of these types of distributions also satisfy the above assumptions.
\end{remark}
Our main result is the following.
\begin{theorem}\label{main-thm}
Assume \eqref{eq:GW1}, \textnormal{(A1)--(A3)}.
\begin{enumerate}[label=\textnormal{(\roman*)}]
\item
If $m<4$, then the killed branching random walk becomes extinct almost surely.
\item
If $m>4$ and in addition, \eqref{eq:Lm} holds, then the killed branching random walk survives
with positive probability.
\end{enumerate}
\end{theorem}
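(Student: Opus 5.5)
The plan is to reduce survival of the killed process to the existence of infinite rays carried by a sparse sequence of \emph{record jumps}, and then to compare these rays with an embedded Galton--Watson-type process. The heuristic is as follows. Since $a(t)/t\to\infty$ by (A1), a particle can only stay above the superlinear barrier $b_n^{(m)}=a(n/m)$ if its position is essentially carried by a few very large jumps. Suppose a ray makes its $k$-th record jump at generation $t_k$ and must then stay above the barrier until the next record time $t_{k+1}$. Then the jump at $t_k$ must have size at least roughly $a(t_{k+1}/m)$, which by (A2) costs probability $\mu^{-t_{k+1}/m+o(t_{k+1})}$. Meanwhile the subtree between $t_{k-1}$ and $t_k$ supplies about $\mu^{t_k-t_{k-1}}$ candidate vertices. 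The embedded ``record-jump tree'' therefore has mean offspring $\mu^{\,t_k-t_{k-1}-t_{k+1}/m+o(t_{k+1})}$. With geometric times $t_k=r^k$ the exponent is $r^{k-1}(r-1-r^2/m)$, and $\max_r (r-1-r^2/m)=m/4-1$ at $r=m/2$. This is the source of the threshold $4$ rather than the first-moment value $2$.

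For part (ii), with $m>4$, I would fix $r=m/2$ and times $t_k=\lfloor t_0 r^k\rfloor$, and build an embedded branching process. Level-$k$ individuals are vertices $v$ with $|v|=t_k$ that are alive, have $X_v\ge a(t_{k+1}/m)+\text{(margin)}$, and whose path since the previous level never dropped by more than a small amount.
\begin{itemize}
\item \emph{Offspring of a level-$(k-1)$ individual.} Among its Galton--Watson descendants at generation $t_k$ (about $W\mu^{t_k-t_{k-1}}$ of them on survival), each independently makes the required jump with probability $\mu^{-t_{k+1}/m+o(\cdot)}$. The mean number of offspring is thus $\mu^{c r^{k}}$ with $c>0$, growing superexponentially.
\item \emph{Control of small steps.} Between record times the intermediate steps must not push the path below the barrier. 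Since the record position is at least $a(t_{k+1}/m)$, which dominates the barrier up to $t_{k+1}$, it suffices that no step is below $-a((n-1)/m)/n$. Condition \eqref{eq:Lm} with a union bound over the roughly $n$ relevant vertices per step, plus Borel--Cantelli, guarantees this.
\item \emph{Positive survival probability.} A second-moment or Chernoff bound on the number of children gives that each individual has at least $2$ good children with probability $1-\varepsilon_k$, where $\sum\varepsilon_k<\infty$. The embedded process then survives with positive probability when $t_0$ is large.
\end{itemize}

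For part (i), with $m<4$, I would show that any infinite alive ray has a sequence of record times $t_1<t_2<\cdots$ for which the expected number of such chains tends to zero. I would run a first moment over \emph{chains} rather than over vertices, crediting each record jump against the \emph{next} record time. Assumption (A3) is used to bound the probability that the position at time $t_{k+1}$ is reached through several moderate jumps rather than one: a position of order $a(t/q)$ made of at most $t$ pieces forces a piece of size $a(t/q)/t$, which costs the same exponential rate $\mu^{-t/q+o(t)}$. The expected number of chains up to level $K$ is then at most $\mu^{E_K+o(t_{K+1})}$, where
\[
E_K=\sum_{k=1}^K\Bigl(t_k-t_{k-1}-\frac{t_{k+1}}{m}\Bigr),
\]
summed over the polynomially many choices of times.

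The key step, which I expect to be the main obstacle, is a deterministic lemma. It should state that for $m<4$, no increasing integer sequence satisfies $E_K\ge -C-\delta t_{K+1}$ for all $K$ with $\delta$ small. The natural route is to rewrite the condition as the linear recursive inequality $t_{K+1}\lesssim m(t_K-t_{K-1})+\cdots$. Its characteristic polynomial $r^2-mr+m$ has complex roots exactly when $m<4$, so solutions oscillate and eventually violate monotonicity. Making this robust to the $o(t)$ errors and to non-geometric sequences needs care. After that, a union bound over all time patterns shows $\P(\text{alive ray up to level }K)\to0$, which gives extinction.
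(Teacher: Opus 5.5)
\textbf{Part (i).} Your reduction matches the paper. Along an infinite alive ray the record times $\tau_k$ satisfy $X_{\tau_k}\ge h_{\tau_{k+1}}:=b_{\tau_{k+1}-1}/(\tau_{k+1}-1)$, an event of probability $p_{\tau_{k+1}}=\mu^{-\tau_{k+1}/m+o(\tau_{k+1})}$ by (A3). Your deterministic lemma is equivalent to Lemma~\ref{lem:four}, and the characteristic-root route works: if $S_K=t_2+\cdots+t_{K+1}$ satisfies $S_K\le m'(S_{K-1}-S_{K-2})$ with $m'<4$, then $S_K/S_{K-1}\in(1,m')$ drops by at least $1-m'/4$ per step, which is impossible indefinitely.

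The gap is the step you treat as routine.
\begin{itemize}
\item The time patterns are not polynomially many. $\{t_2,\dots,t_{K+1}\}$ is an arbitrary finite subset of $\{2,3,\dots\}$, so there are about $2^T$ patterns below $T$. The accumulated errors $\sum_k o(t_{k+1})$ are also not $o(t_{K+1})$.
\item Worse, for $m>2$ and $K\ge2$ the expected number of level-$K$ chains, summed over patterns, is infinite. The patterns $t_k=k$ for $k<K$, $t_K=T$, $t_{K+1}=T+1$ alone contribute $\mu^{T-2T/m+o(T)}$. This is exactly the $m=2$ first-moment phenomenon, so no union bound over all level-$K$ patterns can give $\P(\text{alive ray up to level }K)\to0$.
\end{itemize}
The missing mechanism is localization plus an exponential tilt. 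The paper only counts records at times $t=\tau_k$ where $\tau_2+\cdots+\tau_{k+1}\ge Mt$ with $m<M<4$ (the events $E_t$ in \eqref{def-E_t-1}). It bounds $\mu^{t}\le\mu^{(1-\theta M)t}\mu^{\theta\sum_{s\in A}s}$, with $A$ the set of later record times and $1/M<\theta<1/m$. This turns the exponentially large pattern sum into the convergent product $\prod_{s\ge2}(1+\mu^{\theta s}p_s)$ and gives $\P(E_t)\le C\mu^{-(\theta M-1)t}$. Borel--Cantelli then finishes, since the ratio inequality holds infinitely often along every infinite ray. Your slack form $E_K<-C-\delta t_{K+1}$ can be completed the same way: take a union over violating levels, tilt with $\theta\in(1/(m(1+\delta)),1/m)$, and let $C\to\infty$. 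Without the tilt it does not close.

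\textbf{Part (ii).} This is a genuinely different route from the paper's. You use ratio $m/2$ and per-individual concentration; the paper uses ratio $2$, a global Paley--Zygmund bound on $Y_K$ with $\E[Z_K]\ge q_\infty\E[Y_K]$, and truncation. As written, your version has two defects.
\begin{itemize}
\item Subtree extinction. If $\P(\xi=0)>0$, a level-$(k-1)$ individual's Galton--Watson subtree is empty at generation $t_k$ with probability at least $\P(\xi=0)$. So ``at least two good children with probability $1-\varepsilon_k$, $\sum_k\varepsilon_k<\infty$'' is false. You must pass to the reduced tree of vertices with infinite line of descent and prove summable lower-deviation bounds for its generation sizes. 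Also, $W>0$ needs $\E[\xi\log\xi]<\infty$, which is not assumed; use Seneta--Heyde instead.
\item Use of $(L_m)$. The factor $n$ in $(L_m)$ counts the steps of one path within a block, not vertices (there are $\mu^{n}$ of those). What it yields is a per-path probability $\prod_k q_k>0$ that must be built into the offspring law, not a Borel--Cantelli statement over the tree. Moreover, with block length $(r-1)t_k>t_k$, a ``small'' allowed drop does not match the threshold $a((n-1)/m)/n$. You need a jump margin of order $a(t_{k+1}/m)$, and you must check that it costs only $\mu^{o(t_{k+1})}$, i.e.\ $\Fbar(2a(s))=\mu^{-s+o(s)}$. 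That needs (A3), not just (A2).
\end{itemize}
The paper's choice $t_k=2^{k-1}$ avoids this. The previous position $\ge b_{t_k-1}$ is the buffer over a block of length exactly $t_k$, which is what $(L_m)$ is calibrated to. And $r=2$ minimizes $r^2/(r-1)$, the threshold for geometric ratio $r$, so nothing is lost.
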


 The result is particularly interesting in the sense that, a natural first-moment heuristic suggests the threshold $m=2$; the true critical value $4$ arises from a deterministic constraint along any infinite surviving ray. We note that a regular first-moment argument is often enough in the classical light-tailed displacement cases. For example, \citet{BigginsLubachevskyShwartzWeiss1991} and \citet{Jaffuel2012} reduces the expected number of particles, whose ancestral path satisfies certain conditions to a question about a random walk through a many-to-one lemma and then apply several estimates of the  random walk. Although the standard many-to-one or the first moment identity can still be used in our setting, the exponential change-of-measure arguments used in light-tailed settings are not applicable, since exponential moments of the displacement distribution need not exist.

To see this heuristically, we restrict to the case where the associated random walk has the step distribution which is supported only on the positive side. We start by isolating particles at geometrically
increasing times $t_K=2^{K-1}$, and let $\Gamma_K$ be the number of such particles. We  require these  particles to make jumps
which are at the scale of the barrier at the next geometric generation. These jumps give the particle enough buffer to remain above the barrier until
the next large jump occurs and remain alive according to the above criteria. Indeed, if these particles have a positive probability of survival, this would imply the same for the original one. We further note that, the descendants of the particles, counted in $\Gamma_{K-1}$ are all alive until $N=t_K$. And, we obtain 
\begin{align*}
    &\log_\mu \E\left[\text{\# alive descendants at generation } N \text{ of particles counted in } \Gamma_{K-1}\right]
\hspace{.5in}\\&\hspace{3.5in}=
\left(1-\frac{2}{m}\right)N+o(N).
\end{align*}
Thus a first-moment calculation suggests the value $m=2$: as soon as $m>2$, the expected number of alive particles grows exponentially, while for $m<2$,  $\Gamma_K$ goes extinct almost surely. Therefore, the  analysis is insufficient to argue that $m=4$ is the critical value.

To prove that the probability of survival for $m>4$, we choose an embedded tree where particles consistently perform multiple sufficiently large jumps and use second moment method. A similar approach appears in the proof of the lower bound for the maximal particle in \citet{Gantert2000}. 

However, to see $m=4$ is indeed the critical one, we analyze the infinite surviving rays. \citet{GantertHuShi2011} studied the survival probability through the existence of an infinite surviving ray. However, the techniques of analyzing the rays are not applicable here, already discussed. In our case, if an infinite surviving ray, say $$\rho=x_0\prec x_1\prec x_2\prec\ldots\;$$ exists, consider the \textit{successive new maxima times} along the ray by defining
$$\tau_1 = 1, \qquad\tau_{k+1} = \min \{j>\tau_k:X_j>X_{\tau_k}\}$$
and we show that there must be infinite number of times such that 
$$1=\tau_1<\tau_2<\cdots.$$
These jumps support the trajectory until a later new maxima time of the increment occurs. We then make use of the deterministic constraint
\[
    \limsup_{k\to\infty}
    \frac{\tau_2+\cdots+\tau_{k+1}}{\tau_k}
    \geq 4.
\]
One might get an idea of this deterministic result by considering geometric sequences. This lead us to construct a sequence of events which if do not occur infinitely often will imply extinction, and a Borel-Cantelli argument concludes the proof.

The remainder of the paper is organized as follows. In Section $2$, we prove the main theorem. We first establish almost-sure extinction for $m<4$ and then prove survival with positive probability for $m>4$ with finite variance restriction on the offspring distribution which is later removed by a truncation argument. Section $3$ handles the critical case $m=4$, where we discuss two distributions satisfying the standing assumptions, but differs in the survival behavior. Finally, Section $4$ contains the proofs of the technical lemmas used in the preceding sections.

\section{Proof of Theorem \ref{main-thm}}
\subsection{Proof of extinction when $m<4$}
Here, we prove the extinction part of the main result. Fix $m<4$ and let $S(\rho)\geq b_0$. If the killed branching random walk survives forever, there must exist a sequence of alive vertices forming an infinite ray. That is, there must exist an infinite genealogical ray
$$\rho=x_0\prec x_1\prec x_2\prec\ldots\;$$ such that $$S(x_n)\geq b_n= a(n/m),  \qquad \forall n\geq 0.$$
Therefore, it is enough to show that almost surely, no such infinite ray exists.  

To do so, we consider the behavior of a typical infinite path that would survive. Along such a ray, we write 
$$X_j:=X_{x_j}, \qquad j\geq 1.$$
We first observe that $\{X_j\}_{j\geq 1}$ must be unbounded. Indeed, if there exists $C>0$ such that $X_j\leq C,  \forall j$, this would imply 
$$S(x_n)\leq nC$$ and this contradicts \textnormal{(A1)}. We may therefore define a sequence of \textit{successive new maxima times} along the ray by
$$\tau_1 = 1, \qquad\tau_{k+1} = \min \{j>\tau_k:X_j>X_{\tau_k}\}.$$
Since the increments along the surviving ray are unbounded above, each
$\tau_k<\infty$ for all $k$, almost surely. For $s\geq2$, define
$$h_s:=\frac{b_{s-1}}{s-1}.$$
By the definition of $\tau_{i+1}$, there is no increment before generation
$\tau_{i+1}$ that exceeds $X_{\tau_i}$. Therefore
$$
S(x_{\tau_{i+1}-1})
=
\sum_{j=1}^{\tau_{i+1}-1}X_j
\leq
(\tau_{i+1}-1)X_{\tau_i}.$$
Also, since the ray remains above the killing barrier implies
$S(x_{\tau_{i+1}-1})
\geq
b_{\tau_{i+1}-1},$
it follows that
\begin{equation}\label{new-max-lower-bnd}
X_{\tau_i}
\geq
\frac{b_{\tau_{i+1}-1}}{\tau_{i+1}-1}
=
h_{\tau_{i+1}},
\qquad i\geq1.
\end{equation}
We now note the following deterministic lemma, whose proof is given in Section $4$.
\begin{lemma}\label{lem:four}
Let
$$1=r_1<r_2<\cdots$$
be a deterministic strictly increasing sequence of positive integers. Then
$$\limsup_{k\to\infty}
\frac{r_2+\cdots+r_{k+1}}{r_k}
\geq4.$$
Moreover, the constant $4$ is sharp.
\end{lemma}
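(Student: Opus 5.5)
The plan is to argue by contradiction, rewriting the hypothesis as a second-order linear recursive inequality for the partial sums and then passing to a first-order inequality for their ratios. Suppose the $\limsup$ is less than $4$. Then there exist $c\in(0,4)$ and $K$ such that
$$r_2+\cdots+r_{k+1}\le c\,r_k\qquad\text{for all }k\ge K.$$
(We may take $c>0$, since the left side is positive.) Put $s_k:=r_2+\cdots+r_{k+1}$ for $k\ge1$, and $s_0:=0$. Then $r_k=s_{k-1}-s_{k-2}$ for $k\ge2$, so for $k\ge\max(K,3)$ the hypothesis becomes
$$s_k\le c\,(s_{k-1}-s_{k-2}).$$
The sequence $(s_k)$ is strictly increasing and positive, because each $r_j\ge1$.

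Next I pass to ratios. Define $q_k:=s_k/s_{k-1}$ for $k\ge2$; each $q_k>1$. Dividing the inequality by $s_{k-1}$ gives
$$q_k\le c\Bigl(1-\frac1{q_{k-1}}\Bigr)=:f(q_{k-1}).$$
The key observation is that, for $c<4$, the map $f$ lies strictly below the diagonal, and uniformly so. Indeed,
$$q-f(q)=\frac{q^2-cq+c}{q}=q+\frac{c}{q}-c\ \ge\ 2\sqrt c-c=:\delta>0\qquad\text{for all } q>0,$$
by AM--GM. Positivity of $\delta$ is exactly the condition $c<4$, i.e.\ the discriminant $c^2-4c$ of $x^2-cx+c$ is negative. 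Hence $q_k\le q_{k-1}-\delta$ for all large $k$. After finitely many steps $q_k$ drops below $1$, contradicting $q_k>1$. Therefore the $\limsup$ is at least $4$. The case where the $\limsup$ is infinite needs no argument.

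For sharpness, I would take $r_k=2^{k-1}$. Then $s_k=2^{k+1}-2$, so
$$\frac{s_k}{r_k}=4-2^{2-k}\longrightarrow 4,$$
and the $\limsup$ equals $4$.

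The only step requiring any insight is the reduction to the ratio recursion together with the uniform gap $\delta=2\sqrt c-c$. Once the hypothesis is written in terms of $s_k$, everything else is routine. The one point to check carefully is the index bookkeeping, namely that $r_k=s_{k-1}-s_{k-2}$ holds for the relevant range of $k$ and that $q_k$ is well-defined with $s_{k-1}>0$.
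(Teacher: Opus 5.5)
Your proof is correct, and it takes a different route from the paper's.

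\textbf{The paper's route.} The paper works with the consecutive ratios $q_k=r_{k+1}/r_k$. It expands
$R_k=\frac{r_2+\cdots+r_{k+1}}{r_k}=q_k+1+q_{k-1}^{-1}+(q_{k-1}q_{k-2})^{-1}+\cdots$
and, with $x=\limsup_k q_k$, obtains
$\limsup_k R_k\ge x+\frac{x}{x-1}=4+\frac{(x-2)^2}{x-1}$,
treating $x=1$ separately. It actually derives Lemma~\ref{lem:four} as a corollary of the refined Lemma~\ref{lem:refined-four}. The borderline case $x=2$ of that refined lemma needs a telescoping identity for $q_k+\delta_k$, where $\delta_k=4-R_k$.

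\textbf{Your route.} You pass to ratios of partial sums $s_k/s_{k-1}$, which turns the hypothesis into the one-dimensional recursion $q_k\le c(1-1/q_{k-1})$. A single fact then does the work: $x^2-cx+c$ has no real root for $c<4$, or equivalently the gap $q-f(q)\ge 2\sqrt c-c>0$ holds uniformly. This forces a linear decrease of $q_k$ and the contradiction, with no case analysis on $\limsup_k r_{k+1}/r_k$.

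\textbf{What each buys.} The paper's route identifies ratio-$2$ growth explicitly as the only near-extremal behaviour, through the term $(x-2)^2/(x-1)$. Your route is shorter. It also adapts to the refined lemma with little extra work:
\begin{enumerate}
\item The gap $2\sqrt c-c=\sqrt c\,(4-c)/(2+\sqrt c)$ is of order $4-c$.
\item Replacing $c$ by $c_k=4-C/\log r_k$ therefore gives decrements of order $1/\log r_k$.
\item The ratios $s_k/s_{k-1}$ are then eventually non-increasing, so $r_k$ grows at most exponentially.
\item Hence $\sum_k 1/\log r_k=\infty$, and the same contradiction follows.
\end{enumerate}
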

Fix $m<M<4$. From the above deterministic lemma, it is clear that 
along an infinite ray, its corresponding $\{\tau_k\}_{k\geq 1}$ must satisfy 
$$\tau_2+\ldots+\tau_{k+1}\geq M\tau_k$$ for infinitely many $k$'s. 
Now, we introduce the following sequence of events for $t\geq 2,$ where 
\begin{equation}\label{def-E_t-1}
E_t
=
\left\{
\begin{array}{l}
\exists\, 1=r_1<\cdots<r_k=t<r_{k+1}\ \text{and } v \text{ with } |v|=t,\\
X_{r_i}(v)\geq h_{r_{i+1}},\qquad 1\leq i\leq k,\\
r_2+\cdots+r_{k+1}\geq Mt
\end{array}
\right\}.
\end{equation} 
Observe that on the event that an infinite ray exists, $\{E_t\}$ must occur infinitely often, almost surely. Indeed, considering the sequence $\tau_k$ along the infinite ray, for every $k$ satisfying
$$\tau_2+\cdots+\tau_{k+1}\geq M\tau_k,$$
we may take
$$
t=\tau_k,\qquad r_i=\tau_i,\qquad v=x_{\tau_k}.
$$
Then, \eqref{new-max-lower-bnd} shows that all the conditions in the
definition of $E_t$ are satisfied. Therefore, it is enough to show that $E_t$ occurs only finitely often, almost surely. The next lemma proves this fact. 
\begin{lemma}
\label{Et-summable}
Let $m<M<4$, and let $\{E_t\}_{t\geq2}$ be defined in \eqref{def-E_t-1}. Then
$$\sum_{t=2}^{\infty}\mathbb P(E_t)<\infty.$$
\end{lemma}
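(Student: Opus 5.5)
The plan is a first-moment (union) bound over generation-$t$ vertices and admissible index sequences, combined with the tail estimate \textnormal{(A3)}. An exponential-tilting trick will absorb the combinatorial factor coming from the choice of the sequence $r_2<\cdots<r_{k-1}$.

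\emph{Step 1: union bound.} Fix $t\geq2$. By a union bound over vertices $v$ with $|v|=t$ and over admissible sequences $1=r_1<\cdots<r_k=t<r_{k+1}$,
\[
\P(E_t)\leq \mu^t\sum_{(r_i)}\ \prod_{i=1}^{k}\Fbar\bigl(h_{r_{i+1}}\bigr)\,\mathbf 1\{r_2+\cdots+r_{k+1}\geq Mt\}.
\]
This uses the many-to-one identity $\E\#\{|v|=t\}=\mu^t$. It also uses the fact that for a fixed $v$ the increments $X_{r_1}(v),\dots,X_{r_k}(v)$ are attached to distinct edges, so they are iid and the probability factorizes. (If $\P(X\geq x)$ versus $\P(X>x)$ matters, it only affects constants.)

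\emph{Step 2: tail estimate.} Since $h_s=a((s-1)/m)/(s-1)$, assumption \textnormal{(A3)} with $q=m$, applied at $s-1$, gives $\Fbar(h_s)=\mu^{-s/m+o(s)}$. Fix $\varepsilon>0$. There is $s_0$ such that $\Fbar(h_s)\leq\mu^{-(1-\varepsilon)s/m}$ for all $s\geq s_0$. For $s<s_0$ we bound $\Fbar(h_s)\leq1$, and note that $1\leq C_0\,\mu^{-(1-\varepsilon)s/m}$ with $C_0=\mu^{s_0/m}$. At most $s_0$ indices $r_{i+1}$ are smaller than $s_0$, so for every admissible sequence
\[
\prod_{i=1}^k\Fbar(h_{r_{i+1}})\leq C_0^{s_0}\,\mu^{-c\,\Sigma},\qquad c:=(1-\varepsilon)/m,\quad \Sigma:=r_2+\cdots+r_{k+1}.
\]

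\emph{Step 3: tilting, the main obstacle.} The number of choices of the intermediate indices $r_2,\dots,r_{k-1}\subset\{2,\dots,t-1\}$ is of order $2^t$. This could swamp the gain $\mu^{t(1-cM)}$ if we simply used $\Sigma\geq Mt$. To avoid this, split the exponent with a parameter $\lambda\in(0,1)$. On $\{\Sigma\geq Mt\}$,
\[
\mu^{-c\Sigma}\leq \mu^{-c\lambda Mt}\,\mu^{-c(1-\lambda)\Sigma}.
\]
Summing $\mu^{-c(1-\lambda)\Sigma}$ over all finite subsets $\{r_2,\dots,r_{k-1}\}\subset\{2,3,\dots\}$ and over $r_{k+1}>t$ gives at most
\[
\prod_{j\geq2}\bigl(1+\mu^{-c(1-\lambda)j}\bigr)\cdot\sum_{j>t}\mu^{-c(1-\lambda)j}\leq C_1<\infty,
\]
with $C_1$ independent of $t$. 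Hence $\P(E_t)\leq C_0^{s_0}C_1\,\mu^{t(1-c\lambda M)}$.

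\emph{Step 4: choice of parameters.} Because $M>m$, we can choose $\varepsilon$ small and $\lambda$ close to $1$ so that $c\lambda M=(1-\varepsilon)\lambda M/m>1$. Then $\P(E_t)$ decays geometrically in $t$, and $\sum_{t\geq2}\P(E_t)<\infty$.

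The delicate point is Step 3. A naive count of the sequences, or an attempt to handle the $o(r_{i+1})$ errors uniformly in $k$, fails. The tilt resolves both problems at once: it converts the constraint $\Sigma\geq Mt$ into a fixed exponential gain in $t$, and it leaves a summable weight over all index sets. Step 2 is what keeps the $o(\cdot)$ errors controlled by a constant that does not depend on $k$.
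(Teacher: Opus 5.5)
Your proposal is correct and follows essentially the paper's route: a first-moment bound over generation-$t$ vertices and index sets, (A3) giving $\Fbar(h_s)\le \mu^{-(1-\varepsilon)s/m}$ for large $s$, and an exponential tilt (your $c\lambda$ plays the role of the paper's $\theta\in(1/M,\,1/m-\varepsilon)$) that converts $\Sigma\ge Mt$ into a factor $\mu^{-\theta Mt}$, with the sum over index sets controlled by a convergent product. The only differences are cosmetic. You insert the exponential bound before tilting, absorb small $s$ into $C_0^{s_0}$, and sum over $r_{k+1}$ separately, whereas the paper tilts $p_s$ directly inside $\prod_{s\ge2}(1+p_s\mu^{\theta s})$.
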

\begin{proof}    
Define $p_k := \P(X\geq h_k).$ Fixing a sequence $\{r_2,\ldots,r_{k+1}\}$ and a vertex $v$ such that $|v|=t,$ we note that 
$$\P(X_{r_i}(v)\geq h_{r_{i+1}} \text{ for all }1\leq i\leq k)=\prod_{i=1}^k p_{r_{i+1}}.$$ Therefore, using the first moment with the vertices at generation $t$ and markov inequality, we get 
$$\P(E_t) \leq
\mu^t
\sum_{\substack{
k\geq2,\,
1=r_1<\cdots<r_k=t<r_{k+1}\\
r_2+\cdots+r_{k+1}\geq Mt
}}
\prod_{i=1}^{k}p_{r_{i+1}}.$$
We now use a further upper bound on the above estimate of the right hand side in the following way for some fixed $\theta>0$.
\begin{align}
\P(E_t) \leq
\mu^t
\sum_{\substack{
A\subset\{2,3,\ldots\}\text{ finite}\\
\sum_{s\in A}s\geq Mt
}}
\prod_{s\in A}p_s&\leq \mu^{(1-\theta M)t}\sum_{\substack{
A\subset\{2,3,\ldots\}\text{ finite}\\
}}
\prod_{s\in A}\mu^{\theta s}p_s
\nonumber\\
&= \mu^{(1-\theta M)t}
\prod_{s=2}^{\infty}
\left(1+p_s\mu^{\theta s}\right).
\end{align}
Finally, choosing $1/M<\theta<1/m-\varepsilon$ and using \textnormal{(A3)}, it follows that, for all sufficiently large $s$,
$$
\mu^{\theta s}p_s
\leq
\mu^{-(1/m-\varepsilon-\theta)s}.
$$
Consequently, $\sum_{s=2}^{\infty}\mu^{\theta s}p_s<\infty,$
and therefore
$C_\theta:=
\prod_{s=2}^{\infty}
\left(1+\mu^{\theta s}p_s\right)
<\infty.$
Finally, since $\theta M>1$, we have
$$
\P(E_t)
\leq
C_\theta\mu^{-(\theta M-1)t}.
$$
The right-hand side is summable in $t$.
\end{proof}
This completes the proof.

\subsection{Proof of survival when $m>4$}
In this section, we prove the survival part of the main theorem. Throughout
the section, we fix $m>4$ and assume that $(L_m)$ holds. We
first work under the additional assumption that
$$\E[\xi^2]<\infty.$$ 
Let $t_0 =0$ and
$t_k:=2^{k-1},\;\text{for } k\geq1.$
We construct a tree $\T_2$ embedded in the original Galton-Watson tree
$\T$. The root of $\T_2$ is $\rho$, and the vertices at level $k$ of
$\T_2$ are vertices of $\T$ at generation $t_k$. The construction has some inspiration from \citet{Gantert2000}.

More precisely, the vertices $w_k$ at level $k\geq 1$ of $\T_2$ are required to satisfy the following: 
\begin{equation}\label{first-constraint-embedded}
    w_0 =\rho, \quad |w_k| = t_k,\quad X_{w_k}\geq b_{t_{k+1}}=a(t_{k+1}/m)
\end{equation}
and 
\begin{equation}\label{second-constraint-embedded}
    S(u)\geq S(w_{k})- b_{t_k-1}, \qquad \forall\; w_{k}\prec u \prec w_{k+1}.
\end{equation}

The above two restrictions make sure that the vertices in $\T_2$ attain very high heights early enough so that the buffer coming from \eqref{second-constraint-embedded} can keep the vertices in $\T_2$ alive. 

In particular, we claim that survival of $\T_2$ implies survival of $\T$. Indeed, suppose $\T_2$ contains an infinite ray
\begin{equation}
    \rho=w_0\prec w_1\prec w_2\prec\cdots.
\end{equation}
Recall that $S(\rho)=0$ and thus, we now argue inductively over the geometric blocks. Suppose that the path is
alive up to generation $t_k-1$, which implies that the parent of $w_k$ satisfies
\begin{equation}
S(\operatorname{par}(w_k))\geq b_{t_k-1}.
\end{equation}
Using \eqref{first-constraint-embedded} and \eqref{second-constraint-embedded}, we immediately obtain that
$$S(w_k)= S(\operatorname{par}(w_k)) +X_{w_k} \geq b_{t_k-1}+b_{t_{k+1}}$$ and therefore
$$S(u)>S(w_{k})-b_{t_k-1}\geq b_{t_{k+1}}\geq b_{|u|},\quad \forall\; w_{k}\preceq u\prec w_{k+1}.$$ 
This proves our claim. Hence, it is enough to show that $\T_2$ has a positive probability of survival. 

We begin by denoting $Z_K$ to be the number of vertices in the $K$-th generation of $\T_2$. In what follows, we aim to prove that 
\begin{align}\label{extinction}
    \inf_{K\geq 1}\P(Z_K>0)>0.
\end{align}
Since $\{Z_K>0\}$ is decreasing in $K$, this would imply
$$\P(\{\T_2 \text{ survives}\})=\lim_{K\rightarrow \infty}\P(Z_K>0)>0.$$
To obtain this, we use second moment method argument. We denote $Y_K$ to be the subset of the vertices in $\T_2$ which only \eqref{first-constraint-embedded} holds for all $1\leq k\leq K$. Write $$\pi_k:=\P(X\geq b_{t_{k+1}})$$ and hence
$$\E[Y_K]=\mu^{t_K}\prod_{k=1}^{K}\pi_{k}.$$
It is clear that $\E[Z_K]\leq \E[Y_K]$, but we show a reverse comparison in terms of the first moment in the following lemma.
\begin{lemma}
There exists a constant $q_\infty>0$ such that, for every $K\geq1$,
$$
\E[Z_K]\geq q_\infty \E[Y_K].
$$
\end{lemma}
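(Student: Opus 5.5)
Both $\E[Z_K]$ and $\E[Y_K]$ are first moments over paths, so we compare them through the many-to-one formula. For a fixed vertex $v$ with $|v|=t_K$, write its ancestors at generations $t_1,\dots,t_K$ as $w_1,\dots,w_K$. Then
$$\E[Z_K]=\mu^{t_K}\,\P\big(X_{w_k}\geq b_{t_{k+1}}\ \forall k\leq K,\ \eqref{second-constraint-embedded}\text{ holds for all }k<K\big).$$
The displacements along the path are independent. The big jumps $X_{w_k}$ are separate from the intermediate increments on the open segments $w_k\prec u\prec w_{k+1}$. Constraint \eqref{second-constraint-embedded} only involves the partial sums of the increments strictly between $w_k$ and $u$, namely $S(u)-S(w_k)$. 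Hence, after conditioning on the big jumps, the probability factorizes as
$$\E[Z_K]=\E[Y_K]\prod_{k=1}^{K-1}q_k,\qquad q_k:=\P\Big(\min_{1\leq j\leq t_{k+1}-t_k-1}\widetilde S_j\geq -b_{t_k-1}\Big),$$
where $\widetilde S$ is a random walk with step law $X$. At level $K$ the constraint between $w_K$ and $w_{K+1}$ is not imposed, so there are only $K-1$ factors. It therefore suffices to show $q_\infty:=\prod_{k\geq1}q_k>0$. Since $q_k\in(0,1]$ for small $k$, this reduces to $\sum_k(1-q_k)<\infty$ together with $q_k>0$ for every $k$. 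Positivity of each $q_k$ is automatic, because $\P(X\geq 0)>0$ follows from (A2).

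For the summability, the segment has length $n_k:=t_{k+1}-t_k-1<2^{k-1}$ and the buffer is $b_{t_k-1}=a((2^{k-1}-1)/m)$. A union bound over the steps gives
$$1-q_k\leq \P\big(\exists j\leq n_k:\ X_j< -b_{t_k-1}/n_k\big)+\P\Big(\sum_{j\leq n_k}X_j\mathbf 1\{X_j\geq -b_{t_k-1}/n_k\}\text{ dips below }-b_{t_k-1}\Big).$$
Steps bounded below by $-b_{t_k-1}/n_k$ can lower the walk by at most $n_k\cdot b_{t_k-1}/n_k=b_{t_k-1}$ in total, so the second event is empty provided we use a non-strict comparison, possibly losing a harmless constant factor in the threshold. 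The first term is at most $n_k\,\P(X<-b_{t_k-1}/n_k)$. With $n=t_k$ we have $b_{t_k-1}=a((n-1)/m)$ and $n_k\leq n$, so
$$1-q_k\leq t_k\,\P\big(X<-a((t_k-1)/m)/t_k\big).$$
This is exactly the summand of \eqref{eq:Lm} evaluated at $n=t_k$. Summing over the subsequence $n=t_k$ is bounded by the full series, so $\sum_k(1-q_k)<\infty$ and $q_\infty>0$.

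The main obstacle is the bookkeeping of indices and constants: checking that the buffer is $b_{t_k-1}$ and not $b_{t_k}$, and that the segment length matches the $n$ in \eqref{eq:Lm}, so that the threshold $a((n-1)/m)/n$ is reached exactly rather than only up to a constant factor. If it were off by a constant, I would either absorb it by monotonicity of $a$, comparing $a((n-1)/m)/n$ with the actual threshold, or split the buffer using the slack $b_{t_k-1}+b_{t_{k+1}}$ available in the survival claim. Both options could require a mild re-indexing, but neither changes the structure of the argument.
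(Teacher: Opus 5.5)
Your proposal is correct and follows the paper's proof essentially step for step: first the many-to-one factorization $\E[Z_K]=\E[Y_K]\prod_{k=1}^{K-1}q_k$, then a union bound over the intermediate steps giving $1-q_k\leq t_k\P\bigl(X<-a((t_k-1)/m)/t_k\bigr)$, which is summable by \eqref{eq:Lm}. The differences are cosmetic: you truncate at $-b_{t_k-1}/n_k$ and compare by monotonicity, whereas the paper uses $-b_{t_k-1}/t_k$ directly; you also make $q_k>0$ explicit via $\P(X\geq 0)>0$, which the paper leaves implicit; and the indices match exactly, so the hedging in your last paragraph is unnecessary.
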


\begin{proof}
Fix a vertex $v$ with $|v|=t_K$. For $k\geq1$, let
$$
T_k(v):=
\left\{
\min_{0< r< t_{k+1}-t_k}
\sum_{j=1}^{r}X_{t_k+j}(v)
\geq -b_{t_k-1}
\right\}.
$$
Let 
$q_k:=\P(T_k).$ The event that $v$ is counted by
$Y_K$ is
$$
\bigcap_{k=1}^{K}
\{X_{t_k}(v)\geq b_{t_{k+1}}\},
$$
whereas $v$ is counted by $Z_K$ if, in addition,
$\bigcap_{k=1}^{K-1}T_k(v)$
occurs. Since along a fixed genealogical path the edge increments are independent, we have 
$$
\P(v\text{ is counted in }Z_K)
=
\left(\prod_{k=1}^{K}\pi_k\right)
\left(\prod_{k=1}^{K-1}q_k\right).
$$
The first moment method gives
$\E[Z_K]
=
\E[Y_K]\prod_{k=1}^{K-1}q_k.$
Next, we note that
$$
X_{t_k+j}(v)\geq
-\frac{b_{t_k-1}}{t_{k+1}-t_k} = -\frac{b_{t_k-1}}{t_k},
\qquad 1\leq j\leq t_{k+1}-t_k-1,
$$
implies $T_k(v)$. Therefore, by a union bound,
$$
1-q_k
\leq
t_k\P\left(
X<-\frac{b_{t_k-1}}{t_k}
\right)
=
t_k\P\left(
X<-\frac{a((t_k-1)/m)}{t_k}
\right).
$$
Condition $(L_m)$ implies since $q_k>0$ for every $k$,
$q_\infty:=\prod_{k=1}^{\infty}q_k>0.$
Hence, proved.
\end{proof}
\vspace{0.2in}
Therefore, combining the above lemma with $\E[Z_K^2]\leq \E[Y_K^2]$ and Paley-zygmund inequality, the following holds:
\begin{align}\label{paley}
    \frac{1}{\P(Z_K>0)}\leq \frac{\E[Z_K^2]}{(\E[Z_K])^2}\leq \frac{1}{q_\infty^2}\frac{\E[Y_K^2]}{(\E[Y_K])^2}.
\end{align}
Therefore, it suffices to show that 
\begin{align}\label{paley-Y}
\sup_{K\geq 1}    \frac{\E[Y_K^2]}{(\E[Y_K])^2}<\infty.
\end{align}
Now, we prove this claim. Set $N:=t_K$ and
$P_K:=\prod_{k=1}^{K}\pi_k.$
We use the standard genealogical second-moment decomposition; see, for instance, \citet{HarrisRoberts}.
Let
$$
\nu:=\E[\xi(\xi-1)]<\infty.
$$
We first note that for $0\leq t\leq N-1$, the expected number of ordered pairs of distinct
vertices at generation $N$ whose most recent common ancestor is at
generation $t$ is
$$
\nu\mu^{2N-t-2}.
$$
Indeed, there are on average $\mu^t$ vertices at generation $t$, there
are on average $\nu$ ordered pairs of distinct children of such a
vertex, and each of the two child subtrees has expected
$\mu^{N-t-1}$ descendants at generation $N$.

If two such vertices have most recent common ancestor at generation
$t$, then the distinguished jumps at times $t_k\leq t$ are shared,
whereas those after time $t$ occur on their independent genealogical paths.
Therefore,
$$
\P(\text{both vertices are counted by }Y_K)
=
\frac{P_K^2}
{\prod_{t_k\leq t}\pi_k}.
$$
Next, separating the distinct pairs from the alike pairs of vertices, we obtain
\begin{align*}
\E[Y_K^2]
&=
\nu\sum_{t=0}^{N-1}
\mu^{2N-t-2}
\frac{P_K^2}
{\prod_{t_k\leq t}\pi_k}
+\mu^N P_K.
\end{align*}
Since $(\E[Y_K])^2=\mu^{2N}P_K^2$, it follows that for some constant $C>0,$ we have
\begin{align}\label{second-moment-decompsotion}
\frac{\E[Y_K^2]}{(\E[Y_K])^2}
&=
\frac{\nu}{\mu^2}
\sum_{t=0}^{N-1}
\frac{\mu^{-t}}
{\prod_{t_k\leq t}\pi_k}
+
\frac{\mu^{-N}}{P_K}  \leq
C
\sum_{t=0}^{N}
\frac{\mu^{-t}}
{\prod_{t_k\leq t}\pi_k}.
\end{align}
Choose $\varepsilon>0$ such that
$4(1/m+\varepsilon)<1.$
By assumption \textnormal{(A2)}, for all sufficiently large $k$,
$$
\pi_k
=
\Fbar\left(a(t_{k+1}/m)\right)
\geq
\mu^{-(1/m+\varepsilon)t_{k+1}}.
$$
Hence, there
exists $c_\varepsilon>0$ such that, for every $j\geq1$,
$$
\prod_{k=1}^{j}\pi_k
\geq
c_\varepsilon
\mu^{-(1/m+\varepsilon)
\sum_{k=1}^{j}t_{k+1}}.
$$
For $1\leq t\leq N$, let $j$ be such that
$t_j\leq t<t_{j+1}.$ Then
$\prod_{t_k\leq t}\pi_k
=\prod_{k=1}^{j}\pi_k,$
and since $t_k=2^{k-1}$,
$\sum_{k=1}^{j}t_{k+1}
=
4t_{j}-2
<4t.$
Hence
$$
\frac{\mu^{-t}}
{\prod_{t_k\leq t}\pi_k}
\leq
c_\varepsilon^{-1}
\mu^{-\left[1-4(1/m+\varepsilon)\right]t}.
$$
Hence, the right hand side of \eqref{second-moment-decompsotion} is summable, and we have proved \eqref{paley-Y}.
\vspace{0.1in}

Finally \eqref{paley} implies \eqref{extinction}. Therefore, the killed branching random walk survives with positive probability.

\subsubsection{Survival without finite variance offspring distribution}
Let $R \in \N$ and define 
\begin{equation}
    \xi_R := \xi \wedge R \quad\text{ and } \quad\mu_R:= \E[\xi\wedge R]. 
\end{equation}
By monotone convergence theorem, $\lim_{R\rightarrow\infty}\mu_R = \mu.$ Therefore, we can think of a Galton-Watson tree $\T_R$ which is coupled with the original tree such that according to the Ulam-Harris representation, only first $R$ offsprings are retained. Then $\T_R$ is a Galton--Watson tree with offspring
distribution $\xi_{R}$, and, under this coupling $\T_R\subseteq \T$.
Consequently, whenever $\T_R$ survives, $\T$ automatically survives. 

We choose $R$ large enough so that $$\mu_R\in(1, \infty).$$ Thus, $\T_R$ is itself supercritical and satisfies \eqref{eq:GW1}. Next, we assume that for $\mu$, \textnormal{(A1)} and \textnormal{(A2)} holds, it is easy to verify that with
$\theta_R := \log \mu_R/\log \mu,$
$$a_R(t):= a(\theta_R t)$$ satisfies the same assumptions with $\mu_R.$

Next, set $m_R:= \theta_R m$ and further refine the choice of $R$ so that $m_R>4$. Now just from the notations above, we observe that $a_R(n/m_R) = a(n/m)$ and so \eqref{eq:Lm} is trivially satisfied.

Therefore, it follows from the previous analysis that for large enough $R$, $\T_R$ has positive probability of survival with the barrier $a(n/m).$  Thus, $\T$ has a positive probability of survival.

\section{Critical case: $m=4$}
In this section, we provide examples satisfying \textnormal{(A1)--(A3)} and ($L_m$), but exhibiting different behaviors at $m=4$. The examples indicate the second order term in assumptions on $F$ play an important role in this regime.

For simplicity, we consider deterministic binary tree. Fix $\alpha>0$, and set
$a(t) := 2^{t/\alpha}.$
Define 
$$\psi(u):= \frac{u}{\sqrt{\log(e+u)}}, \qquad u\geq 0.$$
Fix $c\in (0, 1)$ and  consider the following regularly varying  distributions supported on $[1, \infty):$
$$\Fbar_1(x)=C_1x^{-\alpha}2^{c\psi(\alpha \log_2 x)}\text{  and  }\Fbar_2(x)=C_2x^{-\alpha}2^{-c\psi(\alpha \log_2 x)}.$$
We now check the assumptions \textnormal{(A1)}, \textnormal{(A2)} and \textnormal{(A3)}. We have 
$$\log_2 \Fbar_{1, 2}(a(t))=-t\;\pm\; c\psi(t)=-t +o(t).$$ Also, for every $q>0,$ 
$$\alpha \log_2 \left(\frac{a(t/q)}{t}\right) = \frac{t}{q}-\alpha \log_2 t$$ and therefore, 
\begin{align}\label{assum-F-12}
    \log_2 \Fbar_{1,2}\left(\frac{a(t/q)}{t}\right)&= -\frac{t}{q}+ \alpha\log_2t\;\pm \; c\psi\left(\frac{t}{q}- \alpha\log_2t\right)= -\frac{t}{q}+o(t).
\end{align}
Finally, since the support of the step distribution is on $[1, \infty)$, $(L_m)$ is trivially satisfied.

\subsection{Survival at criticality.}

Here, we deal with $F_1$ and use the same embedded tree $\T_2$ as in the survival proof. Repeating the same arguments, we arrive at the second moment method for $Y_K$, which is \eqref{second-moment-decompsotion}
\begin{align}\label{second-moment-critical}
\frac{\E[Y_K^2]}{(\E[Y_K])^2}
&=
\frac{1}{2}
\sum_{t=0}^{N-1}
\frac{2^{-t}}
{\prod_{t_k\leq t}\pi_k}
+
\frac{2^{-N}}{P_K}  \leq
C
\sum_{t=0}^{N}
\frac{2^{-t}}
{\prod_{t_k\leq t}\pi_k}.
\end{align}
For $t_j\leq t<t_{j+1}$, we have 
$\prod_{t_k\leq t}\pi_k
=\prod_{k=1}^{j}\pi_k,$
and since $t_k=2^{k-1}$ imply
$\sum_{k=1}^{j}t_{k+1}
=
4t_{j}-2.$
Hence
$$
\frac{2^{-t}}
{\prod_{t_k\leq t}\pi_k}
\leq
2^{-t+t_j-1/2-c\sum_{k=0}^j\psi(t_{k+1}/4)}\leq 2^{-1/2-c\psi(t_{j+1}/4)}.$$
Thus, 
$$\sum_{t=t_j}^{t_{j+1}-1}\frac{2^{-t}}
{\prod_{t_k\leq t}\pi_k}\leq t_j 2^{-1/2-c\psi(t_{j}/2)}.$$ Since, $$\psi(t_j/2)\asymp \frac{t_j}{\sqrt{\log t_j}}$$ 
and therefore, summability in \eqref{second-moment-critical} proves the claim.

\subsection{Extinction at criticality.}
We consider the $F_2$ distribution here. Again, the proof follows the same structure as in the extinction part of the main theorem, but we will require a refined version of Lemma \ref{lem:four}, whose proof can be found in Section $4$. 
\begin{lemma}\label{lem:refined-four}
    Let
$$1=r_1<r_2<\cdots$$
be a deterministic strictly increasing sequence of positive integers. Then, for every $C>0$, we have
$$
\frac{r_2+\cdots+r_{k+1}}{r_k}
>4-\frac{C}{\log r_k}$$
for infinitely many $k$.
\end{lemma}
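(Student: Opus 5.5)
The plan is to argue by contradiction. Suppose that for some $C>0$ there is $k_0$ with
$$A_k:=r_2+\cdots+r_{k+1}\leq \Bigl(4-\frac{C}{\log r_k}\Bigr)r_k\qquad\text{for all }k\geq k_0 .$$
The sequence $A_k$ is strictly increasing, positive, and satisfies $A_k-A_{k-1}=r_{k+1}$. Hence $r_k=A_{k-1}-A_{k-2}$, and the assumption becomes a second-order linear difference inequality
$$A_k\leq \lambda_k\,(A_{k-1}-A_{k-2}),\qquad \lambda_k:=4-\varepsilon_k,\quad \varepsilon_k:=\frac{C}{\log r_k}.$$
This is the same mechanism as in Lemma \ref{lem:four}. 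With constant $\lambda<4$ the characteristic roots of $x^2-\lambda x+\lambda$ are complex, so the recurrence oscillates and forces some $A_k$ to be nonpositive. I would record this as the template and then quantify it, because now $\lambda_k\uparrow 4$.

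\textbf{Step 1: a lower bound on $\varepsilon_k$.} From the assumption, $r_{k+1}\leq A_k\leq 4r_k$ for $k\geq k_0$. Hence $\log r_k\leq k\log 4+O(1)$, and
$$\varepsilon_k\geq \frac{c}{k}\quad\text{for all large }k,\qquad c:=\frac{C}{2\log 4}>0 .$$
So it suffices to show that the inequality $A_k\leq (4-c/k)(A_{k-1}-A_{k-2})$ cannot hold for all large $k$ with $A$ positive and increasing.

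\textbf{Step 2: ratio dynamics.} Put $y_k:=A_k/A_{k-1}>1$. The inequality reads
$$y_k\leq f_k(y_{k-1}),\qquad f_k(y):=\lambda_k\Bigl(1-\frac1y\Bigr).$$
Since $f_k$ is increasing, one can iterate upper bounds. The key identity is
$$4\Bigl(1-\frac1y\Bigr)-y=-\frac{(y-2)^2}{y}\leq 0 .$$
Writing $z_k:=y_k-2$, this gives
$$z_k-z_{k-1}\leq -\frac{z_{k-1}^2}{2+z_{k-1}}-\frac{c}{k}\Bigl(1-\frac1{y_{k-1}}\Bigr).$$
The sequence is therefore nonincreasing up to the $\varepsilon$ terms. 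Once $y$ is in a compact subinterval of $(1,\infty)$ around $2$, the drift is at least $z^2/3+c'/k$ downward. Two escape routes must then be ruled out. If $y$ ever drops below a fixed level $y_*<2$, then $f_k(y)\leq y-\delta$ for some $\delta>0$, so $y$ falls below $1$ in finitely many steps. But $y\leq1$ gives $f_k(y)\leq 0$, contradicting $A_k>0$. Hence $y_k$ must stay in $(y_*,\infty)$ forever and converge to $2$ from above (it is essentially monotone decreasing).

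\textbf{Step 3: the bottleneck, and the main obstacle.} It remains to show that the drift $c/k$ pushes $z$ below $0$, and then below $y_*-2$, in finitely many steps. The continuum analogue is the Riccati equation
$$z'=-z^2-\frac{c}{k}.$$
The substitution $z=u'/u$ turns it into
$$u''+\frac{c}{k}\,u=0 .$$
This equation is oscillatory, since $c/k\gg 1/(4k^2)$ (Kneser's criterion), with WKB phase $\int\sqrt{c/k}\,dk=2\sqrt{ck}\to\infty$. So $u$ vanishes, and $z\to-\infty$ in finite time.

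The hard step is making this rigorous in discrete time. I would do it directly via the linearisation. Let $B_k$ solve the equality recurrence
$$B_k=\lambda_k'(B_{k-1}-B_{k-2}),\qquad \lambda'_k:=4-\frac{c}{k},$$
starting from $(B_{k_0-1},B_{k_0})=(A_{k_0-1},A_{k_0})$. A discrete Sturm comparison, proved by induction on the ratios using monotonicity of $f_k$, gives $A_k/A_{k-1}\leq B_k/B_{k-1}$ while both remain positive. Then I would show that $B$ changes sign, using the substitution $B_k=2^k w_k$. This yields
$$w_k-2w_{k-1}+w_{k-2}=-\frac{c}{4k}\bigl(2w_{k-1}-w_{k-2}\bigr)+O\bigl(k^{-1}\Delta w\bigr),$$
a discrete $w''+\frac{c}{2k}w\approx0$, which is oscillatory by a discrete Kneser/Hille-type criterion, or by an explicit Prüfer-angle argument whose angle increments are $\asymp\sqrt{c/k}$ and sum to infinity. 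A sign change of $B$ forces some $A_k\leq 0$, which is the desired contradiction.

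Controlling the error terms in this discrete oscillation argument is where I expect most of the technical work.
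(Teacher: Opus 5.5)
Your reduction to $A_k\le\lambda_k(A_{k-1}-A_{k-2})$, the bound $\varepsilon_k\ge c/k$, and the Step 2 inequality are all correct. The proof as written is still incomplete. Step 3, which you call the crux, is only an outline: a discrete Sturm comparison followed by a discrete Kneser/Hille or Pr\"ufer oscillation argument whose error terms you have not controlled. So the contradiction is never actually derived.

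The missing observation is that no oscillation theory is needed. The quadratic term in your Step 2 inequality has the favourable sign, and the forcing $c/k$ is not summable. Your inequality reads
\[
y_{k-1}-y_k\;\geq\;\frac{(y_{k-1}-2)^2}{y_{k-1}}+\frac{c}{k}\Bigl(1-\frac{1}{y_{k-1}}\Bigr)\;\geq\;0 ,
\]
so $(y_k)$ is nonincreasing. Your own Step 2 argument (a uniform decrement once $y$ drops below some $y_*<2$) then shows $y_k\geq2$ for all large $k$. Hence $1-1/y_{k-1}\geq\frac12$, and discarding the quadratic term gives
\[
2\leq y_K\leq y_{k_0}-\frac{c}{2}\sum_{k=k_0+1}^{K}\frac1k\longrightarrow-\infty ,
\]
a contradiction. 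In the Riccati picture, oscillation of $u''+p\,u=0$ is delicate only when $p$ is of order $k^{-2}$. With $p\asymp 1/k$, the linear drift alone already drives $z$ to $-\infty$.

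With this two-line finish, your argument is the paper's proof in different coordinates. The paper uses $q_k=r_{k+1}/r_k$ and $\delta_k=4-R_k$, and shows that $q_k+\delta_k$ is nonincreasing via
$(q_{k-1}+\delta_{k-1})-(q_k+\delta_k)=\frac{(q_{k-1}-2)^2}{q_{k-1}}+\bigl(1-\frac{1}{q_{k-1}}\bigr)\delta_{k-1}$.
One checks that $q_k+\delta_k=3-1/(y_{k-1}-1)$, an increasing function of your $y_{k-1}$, so this is the same Lyapunov identity. Telescoping gives $q_k\to2$ and $\sum_k\delta_k<\infty$, which contradicts $\delta_k\geq C/\log r_k\gtrsim 1/k$. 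Your Sturm-comparison route is probably workable, but it is much heavier and, as submitted, unproven.
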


Now, we introduce the modified sequence of events for $t\geq 2,$ where 
\begin{equation}\label{def-E_t}
E^{\text{critical}}_t
=
\left\{
\begin{array}{l}
\exists\, 1=r_1<\cdots<r_k=t<r_{k+1}\ \text{and } v \text{ with } |v|=t,\\
X_{r_i}(v)\geq h_{r_{i+1}},\qquad 1\leq i\leq k,\\
r_2+\cdots+r_{k+1}\geq \left(4-\frac{1}{\log t}\right)t
\end{array}
\right\}.
\end{equation} 
Again existence of an infinite ray implies that $\{E^{\text{critical}}_t\}$ must occur infinitely often.
Hence, we show that they occur only finitely often, almost surely.  
\begin{lemma}
$$\sum_{t=2}^{\infty}\mathbb P(E^{\text{critical}}_t)<\infty.$$
\end{lemma}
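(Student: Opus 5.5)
The plan is to repeat the first-moment argument of Lemma \ref{Et-summable}, now on the binary tree with $\mu=2$, $m=4$ and $a(t)=2^{t/\alpha}$. The exponential tilt used there is replaced by the exact second-order correction $2^{-c\psi(\cdot)}$ in $\Fbar_2$. The strategy is to show that the single large jump at time $r_{k+1}>t$ costs a factor $2^{-c\psi(t/4)(1+o(1))}$, which beats the loss $2^{t/(4\log t)}$ allowed by the weakened constraint in $E^{\text{critical}}_t$.

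\textbf{Step 1 (explicit form of $p_s$).} Here $b_n=2^{n/(4\alpha)}$, so $h_s=b_{s-1}/(s-1)$ satisfies
$$\alpha\log_2 h_s=\tfrac{s-1}{4}-\alpha\log_2(s-1).$$
Hence
$$p_s:=\P(X\geq h_s)=C_2\,(s-1)^{\alpha}\,2^{-(s-1)/4}\,2^{-c\psi\left((s-1)/4-\alpha\log_2(s-1)\right)}.$$
Since $\psi$ is increasing with bounded derivative, the shift by $O(\log s)$ inside $\psi$ changes $\psi$ by only $O(\log s)$. We therefore get
$$2^{s/4}p_s\leq g(s):=C\,s^{\alpha'}\,2^{-c\psi(s/4)},$$
for some constants $C$ and $\alpha'$. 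Because $\psi(s/4)\asymp s/\sqrt{\log s}$ grows faster than any multiple of $\log s$, the sum $\sum_s g(s)$ is finite. Consequently $\Pi:=\prod_{s\geq2}(1+g(s))<\infty$.

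\textbf{Step 2 (first moment).} Exactly as in Lemma \ref{Et-summable}, a union bound over the $2^t$ vertices at generation $t$ gives
$$\P(E^{\text{critical}}_t)\leq 2^t\sum \prod_{i=1}^k p_{r_{i+1}},$$
where the sum runs over admissible sequences. Write each term as
$$2^{\,t-\frac14\sum_{i}r_{i+1}}\prod_i 2^{r_{i+1}/4}p_{r_{i+1}}.$$
The constraint $\sum_i r_{i+1}\geq(4-1/\log t)t$ bounds the first factor by $2^{t/(4\log t)}$. This is the only place the constraint is used.

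\textbf{Step 3 (isolating the large jump).} Map each admissible sequence to the finite set $A=\{r_2,\dots,r_{k+1}\}$. Then $A\subset\{2,3,\ldots\}$, and its largest element $r_{k+1}$ exceeds $t$. Summing over all such sets and factoring out the largest element $s$ gives
$$\P(E^{\text{critical}}_t)\leq 2^{t/(4\log t)}\sum_{s>t}g(s)\prod_{u<s}(1+g(u))\leq \Pi\, 2^{t/(4\log t)}\sum_{s>t}g(s).$$
Since $\psi(s/4)$ is increasing and grows faster than $s^{\delta}$ for every $\delta<1$, we have
$$\sum_{s>t}g(s)\leq 2^{-c\psi(t/4)(1+o(1))}.$$
The resulting exponent is
$$\frac{t}{4\log t}-c\,\frac{t}{4\sqrt{\log t}}(1+o(1))\to-\infty$$
faster than any multiple of $\log t$. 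This gives summability in $t$.

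The main obstacle is Step 1: the bookkeeping has to show that all polynomial factors are genuinely negligible. These are the factor $(s-1)^\alpha$, the $-\alpha\log_2(s-1)$ shift inside $\psi$, and the off-by-one $s-1$ versus $s$ in $2^{-(s-1)/4}$. Each must be absorbed either into the summable weights $g$ or into the tail $\sum_{s>t}g(s)$, without spending the crucial gain $2^{-c\psi}$. I would first bound $\psi(x-O(\log x))\geq\psi(x)-O(\log x)$ using $\psi'\leq1$, and then check tail monotonicity of $g$ for large $s$. Small values of $s$ only affect the constant $\Pi$.
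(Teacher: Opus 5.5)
Your proposal is correct and is essentially the paper's argument. It uses the same first-moment bound, the same tilt $2^{t-\frac14\sum_i r_{i+1}}\le 2^{t/(4\log t)}$, and the same product $\prod_s(1+2^{s/4}p_s)$ to absorb the remaining jumps, with your $g(s)$ playing the role of the paper's bound $2^{s/4}p_s\le 2^{-\gamma s/\sqrt{\log s}}$. The only difference is which jump pays for the $2^{t/(4\log t)}$ loss: the paper uses that $t=r_k$ itself lies in $A$ (since $k\ge 2$) and extracts the single factor $2^{t/4}p_t$, whereas you extract the maximal element $r_{k+1}>t$, which needs an extra but harmless tail estimate $\sum_{s>t}g(s)\le 2^{-c\psi(t/4)(1+o(1))}$.
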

\begin{proof} 
We first note that following \eqref{assum-F-12}, we have 
\begin{align*}
    \log_2 p_s &= \log_2 \P\left(X\geq \frac{a\left((s-1)/4\right)}{s-1}\right)  \\& =-(s-1)/4 +\alpha \log_2 (s-1)-c\psi\left(\frac{(s-1)}{4}-\alpha \log_2(s-1)\right).
\end{align*}
Since 
$$\psi\left(\frac{(s-1)}{4}-\alpha \log_2(s-1)\right) =(1+o(1))\frac{s}{4\sqrt{\log s}},$$ there exists $\gamma>0$ such that 
\begin{equation}\label{bnd-hs-critical}
p_s\leq 2^{-s/4-\gamma s/\sqrt{\log s}},    
\end{equation}
 for large enough $s.$
Using the first moment with the vertices at generation $t$ and markov inequality, we get 
$$\P(E^{\text{critical}}_t) \leq
2^t
\sum_{\substack{
k\geq2,\,
1=r_1<\cdots<r_k=t<r_{k+1}\\
r_2+\cdots+r_{k+1}\geq \left(4-\frac{1}{\log t}\right)t
}}
\prod_{i=1}^{k}p_{r_{i+1}}.$$
Therefore for large enough $t$, using \eqref{bnd-hs-critical}, we have
\begin{align}
\P(E^{\text{critical}}_t)& \leq
2^t
\sum_{\substack{
A\subset\{2,3,\ldots\}\text{ finite}\\
\sum_{s\in A}s\geq \left(4-\frac{1}{\log t}\right)t\\
t\in A
}}
\prod_{s\in A}p_s\leq 2^{t/(4 \log t)}\sum_{\substack{
A\subset\{2,3,\ldots\}\text{ finite}\\t\in A
}}
\prod_{s\in A}2^{s/4}p_s
\nonumber\\&\leq 2^{t/(4 \log t)-\gamma t/\sqrt{\log t}}\sum_{\substack{
A\subset\{2,3,\ldots\}\text{ finite}\\t\notin A
}}
\prod_{s\in A}2^{s/4}p_s\\&\leq C2^{-\gamma t/(2\sqrt{\log t})}
\prod_{s=2}^{\infty}
\left(1+2^{s/4}p_s\right).
\end{align}
Again from \eqref{bnd-hs-critical}, we have $\P(E_t^{\text{critical}})\leq C_{\text{critical}}2^{-\gamma t/(2\sqrt{\log t})}$, and hence summable. 
\end{proof}
Therefore, for the distribution $F_2,$ extinction happens almost surely at $m=4.$

\section{Proof of Technical Lemmas}
\subsection{Proof of Lemmas \ref{lem:four} and \ref{lem:refined-four}}
For $k\geq 1$, write
$$
q_k:=\frac{r_{k+1}}{r_k}
\qquad\text{and}\qquad
R_k:=\frac{r_2+\cdots+r_{k+1}}{r_k}.
$$
Since $(r_k)$ is strictly increasing, $q_k>1$ for every $k$.

We aim at proving Lemma \ref{lem:refined-four}, since Lemma \ref{lem:four} will follow as a corollary. Fix $C>0$ and suppose, for contradiction, that
$$
R_k\leq 4-\frac{C}{\log r_k}
$$
for all sufficiently large $k$. In particular, $R_k<4$ eventually. Since
$R_k\geq 1+q_k,$ we have $q_k<3$ for all sufficiently large $k$. Set
$$
x:=\limsup_{k\to\infty}q_k.
$$
Then $1\leq x\leq 3$.

We first observe that this would imply $x=2$. Suppose first that $x>1$. Fix
$\varepsilon>0$. For all sufficiently large $j$, $q_j\leq x+\varepsilon.$
Choose a subsequence $(k_\ell)$ such that $q_{k_\ell}\to x$. Using
$$
R_k
=
q_k+1+\frac{1}{q_{k-1}}
+\frac{1}{q_{k-1}q_{k-2}}
+\cdots,
$$
we obtain
$$
\lim_{\varepsilon \rightarrow 0}\limsup_{k\to\infty}R_k
\geq \lim_{\varepsilon \rightarrow 0}\left(
x+\sum_{j=0}^{\infty}\frac{1}{(x+\varepsilon)^j}\right)
=4+\frac{(x-2)^2}{x-1}.
$$
This implies that $x=2$.

Next, consider the case $x=1$. Indeed, for every $\varepsilon>0$, we have $q_j\leq1+\varepsilon$ for all
sufficiently large $j$, and therefore
$$
\liminf_{k\to\infty}R_k
\geq
1+\sum_{j=0}^{\infty}\frac{1}{(1+\varepsilon)^j}
=
1+\frac{1+\varepsilon}{\varepsilon}.
$$
Letting $\varepsilon\downarrow0$ again yields contradiction. Thus, whenever
$\limsup_{k\to\infty}q_k\neq 2,$
we have that for every $C>0$, it would imply that
$$
\frac{r_2+\cdots+r_{k+1}}{r_k}
>4-\frac{C}{\log r_k}$$
for infinitely many $k$. Finally, it remains to handle the case when $\limsup_{k\rightarrow \infty} q_k=2$. Define
$\delta_k:=4-R_k.$
By the contradiction assumption,
$$
\delta_k\geq\frac{C}{\log r_k}>0
$$
for all sufficiently large $k$.
Next, since
$R_k=q_k+R_{k-1}/q_{k-1}$, we have
$$
4-\delta_k
=
q_k+\frac{4-\delta_{k-1}}{q_{k-1}}.
$$
Consequently,
\begin{align}\label{delta-q-relation}
(q_{k-1}+\delta_{k-1})-(q_k+\delta_k)
&=
q_{k-1}-4+\frac{4}{q_{k-1}}
+\left(1-\frac{1}{q_{k-1}}\right)\delta_{k-1} \\
&=
\frac{(q_{k-1}-2)^2}{q_{k-1}}
+\left(1-\frac{1}{q_{k-1}}\right)\delta_{k-1}.
\end{align}
Both terms on the right-hand side are nonnegative, and therefore that
$q_k+\delta_k$ is eventually non-increasing. We also note that $q_k+\delta_k$ is bounded below, since $\limsup_{k\rightarrow \infty} q_k =2$ and $\delta_k>0$, by our assumptions, and hence the sequence $\{q_k+\delta_k\}_{k\in \N}$ converges. Therefore, summing both the sides of \eqref{delta-q-relation} yields
$$
\sum_k \frac{(q_k-2)^2}{q_k}<\infty\quad
\text{ and }\quad
\sum_k
\left(1-\frac{1}{q_k}\right)\delta_k<\infty.
$$
The first estimate above implies that in fact $\lim_k q_k=2.$ Hence, for all sufficiently large $k$, the second estimate would give
$$
\sum_k\delta_k<\infty.
$$

This contradicts the assumption on $\delta_k$. Indeed, since
$q_k\to2$, we have $q_k\leq3$ for all sufficiently large $k$. Thus,
for some constants $A,B>0$, $r_k\leq A3^k$
and hence
$\log r_k\leq B+k\log3.$
Therefore, there exists $C_1>0$ such that
$$
\sum \delta_k\geq C\sum_k\frac{1}{\log r_k}\geq C_1\sum_k \frac{1}{k}=\infty.
$$
This proved the lemma. Finally, the constant $4$ is sharp by taking $r_k=2^{k-1}$.

\section{Discussion}
The present work deals with the discrete-time branching random walk. A natural direction for further study is to investigate the corresponding continuous-time setting of branching Lévy processes, where the motion of each particle is governed by a Lévy process. It would be interesting to see whether an analogous phase transition can hold and to what extent the methods developed here, in particular the analysis of the infinite surviving rays, can be adapted to the continuous time setting.

\paragraph{Acknowledgements}
The author thanks Rishideep Roy and Alexander R. Watson for their helpful comments while finalizing the manuscript.

Ramkrishna Jyoti Samanta acknowledges support from the Additional Funding
Programme for Mathematical Sciences, delivered by EPSRC (EP/V521917/1)
and the Heilbronn Institute for Mathematical Research.

\bibliography{references}

\end{document}